\documentclass[a4paper, 11pt]{amsart}
\usepackage{fullpage}
\title
{Decomposition theorem for good moduli morphisms}
\date{}
\author{Tasuki Kinjo}


\makeatletter
 
  \@addtoreset{equation}{section}
 \makeatother

\usepackage{amsmath, amssymb, amsthm, amscd, comment, mathtools, amsfonts, color}
\usepackage{stmaryrd}
\usepackage[frame,cmtip,curve,arrow,matrix,line,graph]{xy}
\usepackage[mathscr]{euscript}
\usepackage{mathrsfs}
\usepackage{tikz}
\usetikzlibrary{intersections, calc}

\usepackage[T1]{fontenc}
\usepackage{hyperref}

\theoremstyle{plain}

\newtheorem{thm}{Theorem}[section]
\newtheorem{prop}[thm]{Proposition}
\newtheorem{def-prop}[thm]{Definition-Proposition}

\newtheorem{lem}[thm]{Lemma}
\newtheorem{cor}[thm]{Corollary}
\newtheorem*{thm*}{Theorem}

\theoremstyle{definition}
\newtheorem{defin}[thm]{Definition}

\newtheorem*{ACK}{Acknowledgement}

\theoremstyle{remark}
\newtheorem{rmk}[thm]{Remark}
\newtheorem*{rmk*}{Remark}

\usepackage{enumitem}
\newlist{thmlist}{enumerate}{1}
\setlist[thmlist]{label=(\roman{thmlisti}), ref=\thethm(\roman{thmlisti})}

\newcommand{\bA}{\mathbb{A}}

\newcommand{\bC}{\mathbb{C}}

\newcommand{\bG}{\mathbb{G}}

\newcommand{\bQ}{\mathbb{Q}}

\DeclareMathOperator{\Hom}{Hom}

\DeclareMathOperator{\codim}{codim}
\DeclareMathOperator{\GL}{GL}

\DeclareMathOperator{\pr}{pr}




\DeclareMathOperator{\fib}{fib}

\newcommand{\cE}{{\mathcal E}}
\newcommand{\cF}{{\mathcal F}}

\newcommand{\cH}{{\mathcal H}}

\newcommand{\cU}{{\mathcal U}}
\newcommand{\cV}{{\mathcal V}}

\newcommand{\fM}{{\mathfrak M}}

\newcommand{\fS}{{\mathfrak S}}
\newcommand{\fT}{{\mathfrak T}}

\newcommand{\fX}{{\mathfrak X}}
\newcommand{\fY}{{\mathfrak Y}}
\newcommand{\fZ}{{\mathfrak Z}}

\newcommand{\GIT}{{/\! \! /}}

\newcommand\freefootnote[1]{%
  \let\thefootnote\relax%
  \footnotetext{#1}%
  \let\thefootnote\svthefootnote%
}

\renewcommand{\det}{\mathrm{det}}

\renewcommand{\H}{\mathrm{H}}

\begin{document}

\begin{abstract}

In this short note, we will explain that the good moduli space morphisms behave as if they are proper when we consider sheaf operations, though they are not separated.
For example, the decomposition theorem and the base change theorem hold for these morphisms, which have applications to the cohomological study of moduli spaces.

\end{abstract}

\maketitle

\section{Main results}

Throughout the paper,
we will work over the complex number field.

\subsection*{Base change theorem}

For a stack $\fX$ of finite type,
we let $D^+_c(\fX)$  denote the lower bounded constructible derived category of sheaves of $\bQ$-vector spaces with respect to the analytic topology over $\fX$.
Let $f \colon \fX \to \fY$ be a morphism.
We say that the
\textit{base change theorem} holds for $f$ if the following is true for any $\cF \in D^+_c(\fX)$:

\begin{itemize}
    \item For a finite type morphism $t \colon \fT \to \fY$, 
    the Beck--Chevalley map
     \[
        t^* f_*  \cF \to f'_* t'{}^{ *} \cF
        \]
    is invertible, where $f' \colon \fT \times_{\fY} \fX \to \fT$ and   
    $t' \colon \fT \times_{\fY} \fX \to \fX$ are base change of $f$ and $t$ respectively.

\end{itemize}

\subsection*{Weight preservation}

For a stack $\fX$ of finite type, we let $D^+_H(\fX)$ denote the lower bounded derived category of mixed Hodge modules on $\fX$,
which was constructed very recently by Swann Tubach \cite{Tub2}
as a part of his extension of the six-functor formalism of mixed Hodge modules to Artin stacks.
We say that $f$ \textit{preserves weights} if the following condition holds:
\begin{itemize}
    \item For a an object $M \in D^+_H(\fX)$ which is pure of weight $n$,
    $f_* M$ is also pure of weight $n$.
\end{itemize}

\subsection*{Main theorem}

The aim of this note is to prove the following statement:

\begin{thm}\label{thm:main}
    Let $\fX$ be a finite type Artin stack with affine diagonal admitting a good moduli space $h \colon \fX \to X$.
  Then $h$ satisfies the base change theorem and preserves weights.
\end{thm}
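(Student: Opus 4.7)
The plan is to reduce both statements to an explicit model case via the Alper--Hall--Rydh local structure theorem for good moduli morphisms, and then to analyze the model by exploiting the scaling action on a linear representation.

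Since both base change and weight preservation are étale-local on the target $X$, the local structure theorem allows us to assume that $\fX \cong [\Spec A / G]$ for a reductive group $G$ acting on an affine scheme $\Spec A$, with $X = \Spec A^G$. Using Sumihiro's equivariant embedding theorem, we embed $\Spec A$ equivariantly as a closed subscheme of a finite-dimensional $G$-representation $V$, obtaining a closed immersion $\iota \colon [\Spec A/G] \hookrightarrow [V/G]$ together with a compatible closed immersion $\bar\iota \colon X \hookrightarrow V \GIT G$ of good moduli spaces. Both $\iota$ and $\bar\iota$ are proper, so the theorem for $h$ follows formally from the theorem for the model map $h' \colon [V/G] \to V \GIT G$: for base change this uses that $\bar\iota^* \bar\iota_* = \id$ on complexes supported on $X$, and for weight preservation it uses that closed immersions both preserve and reflect purity.

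To handle the model case, I would exploit the scaling $\bG_m$-action on $V$, which commutes with the $G$-action and contracts $V$ onto the origin, realizing $\B G = [0/G] \hookrightarrow [V/G]$ as the fixed locus. A Białynicki--Birula / hyperbolic-localization argument should then express $h'_*\cF$, for $\cF \in D^+_c([V/G])$, in terms of the pushforward of the restriction $j^* \cF$ along $\B G \to \pt$, where $j \colon \B G \hookrightarrow [V/G]$ is the inclusion of the origin. For this base case, both properties are classical: base change follows from the exactness of $G$-invariants for reductive $G$, while weight preservation follows from the well-known purity of $\H^*(\B G; \bQ)$, which is a polynomial algebra with generators pure of weight equal to their cohomological degree.

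The main obstacle is the contraction step: the argument just sketched applies directly to $\bG_m$-equivariant objects, whereas an arbitrary $\cF \in D^+_c([V/G])$ is not a priori equivariant. One strategy is dévissage along the Hesselink--Kirwan--Kempf--Ness stratification of $V$ into instability strata, each of the form $[G \times_P Y]$ for a parabolic $P \subset G$ and a lower-dimensional $P$-space $Y$, enabling an induction on $\dim V$; an alternative is to produce a monodromy filtration on $D^+_c([V/G])$ from the contraction and reduce to the monodromy-invariant part. Care is also needed in the base change step for $\B G \to \pt$, as $h'_*\cF$ is typically unbounded above, so the relevant assertions must be formulated within $D^+_c$ and $D^+_H$ rather than their bounded subcategories.
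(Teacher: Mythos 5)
Your first reduction (\'etale-localize on $X$ via the Alper--Hall--Rydh local structure theorem to a quotient stack $[\Spec A/G]$ with $G$ reductive) is exactly how the paper begins, and the further passage to a linear model $[V/G]\to V\GIT G$ via Sumihiro is harmless. But the heart of your argument --- the contraction step for the model case --- has a genuine gap that you yourself flag and do not close, and it is precisely where the real difficulty lives. Hyperbolic localization for the scaling action would at best compute the stalk of $h'_*\cF$ at the image of the origin in $V\GIT G$, whereas the base change theorem must be verified for arbitrary $\fT\to V\GIT G$; more fundamentally, the contraction principle ($i^*\simeq$ global sections on the attracting set) simply fails for a non-$\bG_m$-equivariant $\cF\in D^+_c([V/G])$, and neither of your proposed repairs works as stated: the Hesselink--Kirwan stratification is a stratification of the \emph{unstable} locus, which is empty for the trivial linearization on an affine $V$, and no monodromy-filtration device is exhibited. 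A secondary problem is the base case: base change and weight preservation for $\B G\to\pt$ do not follow from ``exactness of $G$-invariants,'' which is a statement about quasi-coherent cohomology; constructible pushforward along $\B G\to\pt$ is cohomology of the classifying space and is far from exact, so this step also needs an argument (e.g.\ approximation of $\B G$ by $[(\bA^N\setminus 0)/G]$ for a faithful representation).

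The paper avoids the equivariance issue entirely by a different mechanism: an induction on the maximal stabilizer dimension, powered by a class of ``approximately proper'' morphisms closed under composition, \'etale descent on the target, cancellation of $-\times\fZ$, and --- crucially --- approximation by opens of vector bundles whose complements have arbitrarily large fibrewise codimension. The key lemma constructs, for each $n$, a $G$-representation $V_n$ such that (i) the $(G\times\bG_m)$-unstable locus of $V_n$ (linearized by the $\bG_m$-character) has codimension $>n$, and (ii) every semistable point of $Y\times V_n$ has stabilizer of codimension $\geq 2$ in $G\times\bG_m$. Point (ii) drops the maximal stabilizer dimension so the induction hypothesis applies to $[(Y\times V_n)^{\ss}/(G\times\bG_m)]\to(Y\times V_n)^{\ss}\GIT(G\times\bG_m)$, point (i) feeds the approximation axiom, and the remaining map of GIT quotients is proper by variation of GIT. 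If you want to salvage your approach, you would need to replace the contraction argument by something of this kind; as written, the proposal does not constitute a proof.
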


When $\fX$ is the moduli space of quiver representations, this theorem was proved by  Davison and Meinhardt in \cite[\S 4.1]{dm20}  using the approximation of the morphism $h$ by the forgetful map from the moduli space of framed quiver representations which is proper and representable.

\subsection*{Strategy of the proof}

Our proof is also based on an approximation, but we need some new idea since a naive generalization of the moduli space of framed quiver representation for a general affine GIT quotient might contain strictly semistable points.
We will overcome this difficulty by combining induction on the dimension of stabilizer groups of $\fX$ and Luna's \'etale slice theorem \cite[Theorem 4.12]{ahr20}.


\section{Approximately proper morphisms}

We will introduce the notion of \textit{approximately proper morphisms},
which behave as proper maps when we consider sheaf-operations.

\begin{defin}\label{defin:AP}
    Define a class $\mathrm{AP}$ of morphisms between finite type Artin stacks to be the minimal one satisfying the following conditions:
    \begin{enumerate}[label={\rm(AP\arabic*)},ref={\upshape(AP\arabic*)}]

    \item \label{item:proper}  All proper morphisms represented by Deligne--Mumford stacks are in $\mathrm{AP}$. 
    
    \item \label{item:comp} $\mathrm{AP}$ is closed under composition.

    \item \label{item:etale} Being in $\mathrm{AP}$ can be checked \'etale locally on the target.

    \item \label{item:proj} Let $f \colon \fX \to \fY$ be a morphism and $\fZ$ be a non-empty finite type Artin stack.
    Assume that the composition
    \[
    \fX \times \fZ \xrightarrow[]{\pr_1} \fX \xrightarrow[]{f} \fY
    \]
    is in $\mathrm{AP}$.
    Then $f$ is in $\mathrm{AP}$.
    
    \item \label{item:approximate} Let $f \colon \fX \to \fY$ be a morphism. Assume that for each integer $n$, there exists a vector bundle $\cE_n$ over $\fX$ and and an open subset $\cU_n \subset \cE_n$ with $\codim_{\cE_n |_{x}}((\cE_n \setminus \cU_n) |_{x}) > n$ for each $x \in \fX$ such that the composition 
    \[
    \cU_n \hookrightarrow \cE_n \to \fX \xrightarrow[]{f} \fY
    \]
    is in $\mathrm{AP}$.
    Then $f$ is in $\mathrm{AP}$.
    \end{enumerate}

    A morphism contained in $\mathrm{AP}$ is called approximately proper.
\end{defin}

\begin{prop}\label{pro:AP_main}
   An approximately proper morphism satisfies the base change theorem and preserves weights. 
\end{prop}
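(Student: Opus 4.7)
The plan is to show that the class $\cC$ of morphisms satisfying both the base change theorem and weight preservation is closed under the operations (AP1)--(AP5). Since $\mathrm{AP}$ is defined as the minimal class closed under these operations, this will force $\mathrm{AP} \subseteq \cC$.

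Conditions (AP1)--(AP3) are essentially classical: proper base change and the preservation of weights under proper pushforward of mixed Hodge modules (extended to stacks by Tubach) handle (AP1); both properties are manifestly stable under composition of pushforwards, giving (AP2); and (AP3) follows from smooth base change together with the fact that \'etale pullback preserves weight filtrations. For (AP4), I would fix a closed point $z_0 \in \fZ(\bC)$ and let $i \colon \fX \hookrightarrow \fX \times \fZ$ denote the induced section of $\pr_1$. Since $\pr_1 \circ i = \id_\fX$, pushing forward gives
\[
f_* \cF \;=\; (f \circ \pr_1)_* (i_* \cF)
\]
for any $\cF \in D^+_c(\fX)$, and an analogous identity in $D^+_H$. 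Base change and weight preservation for $f$ then follow from the corresponding properties of $f \circ \pr_1$ applied to $i_* \cF$, together with base change for the closed immersion $i$ (which is proper, hence BC-compatible and weight-preserving).

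The main technical step is (AP5). Given the approximation data $\iota_n \colon \cU_n \hookrightarrow \cE_n \xrightarrow{\pi_n} \fX$, let $\jmath_n \colon \cE_n \setminus \cU_n \hookrightarrow \cE_n$ denote the closed complement. Applying $(\pi_n)_*$ to the localization triangle for $\pi_n^* \cF$ along $(\iota_n, \jmath_n)$, and using the isomorphism $(\pi_n)_* \pi_n^* \cF \simeq \cF$ (since $\pi_n$ is a vector bundle with contractible fibers), followed by $f_*$, one obtains the distinguished triangle
\[
(f \circ \pi_n \circ \jmath_n)_* \jmath_n^! \pi_n^* \cF \;\to\; f_* \cF \;\to\; (f \circ \pi_n \circ \iota_n)_* \iota_n^* \pi_n^* \cF \;\to\; +1.
\]
The fiberwise codimension bound $\codim_{\cE_n|_x}\bigl((\cE_n \setminus \cU_n)|_x\bigr) > n$ combined with Artin-type vanishing forces $\jmath_n^! \pi_n^* \cF$ to live in cohomological degrees greater than roughly $2n$, so for any fixed degree $i$ and $n$ sufficiently large the map $\H^i f_* \cF \to \H^i (f \circ \pi_n \circ \iota_n)_* (\iota_n^* \pi_n^* \cF)$ is an isomorphism. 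Since the composition $f \circ \pi_n \circ \iota_n$ lies in $\cC$ by hypothesis, and both $\pi_n^*$ (smooth pullback) and $\iota_n^*$ (open restriction) preserve purity, each cohomology of $f_* \cF$ is pure and satisfies base change; letting $i$ vary delivers both properties for $f$. The entire approximation survives base change along $t \colon \fT \to \fY$ because vector bundles and their high-codimension open subsets pull back to the same type of data.

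The main obstacle is the codimension-based vanishing in (AP5): one must rigorously verify that $\jmath_n^! \pi_n^* \cF$ is concentrated in degrees tending to infinity with $n$. This is a fiberwise Artin vanishing statement for a constructible complex on the smooth ambient vector bundle, and once established, the rest of the argument is essentially formal cohomological and Hodge-theoretic bookkeeping through the tower.
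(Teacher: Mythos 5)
Your overall strategy is the same as the paper's: let $\cC$ be the class of morphisms satisfying both properties and check that $\cC$ is closed under (AP1)--(AP5), so that minimality forces $\mathrm{AP}\subseteq\cC$. Your treatment of (AP1)--(AP3) and of (AP5) matches the paper's (the paper phrases the (AP5) step via $\fib(\cF\to p_{n,*}p_n^*\cF)\in D_c^{>2n}$ rather than via $\jmath_n^!$ and the localization triangle, but these are the same estimate, and the paper asserts the codimension-based vanishing at the same level of detail that you flag as needing verification).

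The genuine gap is in (AP4). You take a point $z_0\in\fZ(\bC)$ and treat the induced section $i\colon\fX\to\fX\times\fZ$ as a closed immersion, hence proper, hence base-change-compatible and weight-preserving. But $\fZ$ is an arbitrary non-empty Artin stack, and when $\fZ$ has positive-dimensional stabilizers the map $\pt\to\fZ$ is not a closed immersion; in the one place the paper actually invokes (AP4), namely $\fZ=B\bG_m$ in Section 4, the section $i$ is (the base change of) the $\bG_m$-torsor $\pt\to B\bG_m$, which is affine and not proper. Concretely, $e_*\bQ_{\pt}$ on $B\bG_m$ has $\cH^1\cong\bQ(-1)$ of weight $2$, so $i_*\cF$ is \emph{not} pure even when $\cF$ is; since weight preservation of $(f\circ\pr_1)_*$ only constrains pure inputs, the identity $f_*\cF=(f\circ\pr_1)_*(i_*\cF)$ yields nothing about the purity of $f_*\cF$. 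Likewise the base-change compatibility of $i_*$ is not free for such an affine, non-proper morphism, so the factorization of the Beck--Chevalley map through $i$ is not justified. The paper avoids $i$ entirely: it uses $\pr_1^*\cF$ instead of $i_*\cF$, notes the projection-formula isomorphism $\pr_{1,*}\pr_1^*\cF\cong\cF\otimes\H^{\bullet}(\fZ)$, and observes that $\cF$ is a direct summand of $\cF\otimes\H^{\bullet}(\fZ)$ (split off by the unit in $\H^0(\fZ)$), so that the base-change and purity statements for $f\circ\pr_1$ applied to $\pr_1^*\cF$ restrict to the corresponding statements for $f$ applied to $\cF$. You should replace your (AP4) step by this retraction argument.
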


\begin{proof}
    Let $\mathrm{BW}$ be the class of morphisms between finite type stacks satisfying the base change theorem and preserving weights.
    It is enough to show the inclusion $\mathrm{AP} \subset \mathrm{BW}$.
    Equivalently, it is enough to show that the claim \ref{item:proper} -- \ref{item:approximate} in Definition \ref{defin:AP} holds 
    after replacing
    $\mathrm{AP}$ with $\mathrm{BW}$.
    The statement \ref{item:proper} follows from \cite[Thereom A.0.8]{ACV03} and \cite[Theorem 3.10, Proposition 3.21]{Tub2}.
    The statement \ref{item:comp} and  \ref{item:etale} are obvious.
    We now prove \ref{item:proj}.
    Take $\cF \in D^+_c(\fX)$.
    Then we have an isomorphism
    \[
    \pr_{1, *} \pr_1^* \cF \cong \cF \otimes \H^{\bullet}(\fZ).
    \]
    Therefore the base change theorem for $f \circ \pr_1$ applied to the complex $\pr_1^* \cF$ implies the base change theorem for $f$ and the complex $\cF$.
    The weight-preservation can be proved analogously.
    Finally we prove \ref{item:approximate}.
    Take a complex $\cF \in D^{\geq 0}_c(\fX)$.
    Let $p_n \colon \cU_n \to \fX$ be the natural morphism and $t \colon \fT \to \fY$ be a finite type morphism.
    Consider the following diagram:
    \[
    \xymatrix{
    {\cV_n} \ar[r]^-{p_n'} \ar[d]^-{t''}
    & {\fS}  \ar[r]^-{f'} \ar[d]^-{t'}
    & {\fT} \ar[d]^-{t} \\
    {\cU_n} \ar[r]^-{p_n}
    & {\fX} \ar[r]^-{f}
    & {\fY}
    }
    \]
    where two squares are Cartesian.
    Consider the following commutative diagram:
    \[
    \xymatrix{
    {t^* f_* \cF}
    \ar[rr]
    \ar[d]
    & {}
    & {f'_* t'^* \cF} \ar[d] \\
    {t^* f_* p_{n, *} p_n^* \cF}
    \ar[r]^-{\simeq}
    & { f'_* p'_{n, *} t''^*  p_n^* }
    \ar[r]^-{\simeq}
    & {f'_* p'_{n, *} p_n'^* t'^* \cF .}
    }
    \]
    Here, the left bottom horizontal map is invertible by the assumption that $f \circ p_n$ is in $\mathrm{BW}$.
    By the assumption on the codimension of $\cE_n \setminus \cU_n$, 
    we have
    \[
    \fib(\cF \to p_{n, *} p_{n}^* \cF) \in D_c^{> 2n}(\fX),
    \quad \fib(t'^* \cF \to p_{n, *}' p_{n}'^* t'^* \cF) \in D_c^{> 2n}(\fX).
    \]
    In particular, we have 
    \[
    \fib(t^* f_* \cF \to f'_* t'^* \cF) \in D_c^{> 2n}(\fX)
    \]
    for any $n$, hence the Beck--Chevalley map $t^* f_* \cF \to f'_* t'^* \cF$ is invertible.
    The weight-preservation can be proved analogously.
    
\end{proof}

\begin{prop}\label{prop:good_AP}
    Let $\fX$ be an Artin stack with affine  diagonal admitting a good moduli space $h \colon \fX \to X$.
    Then $h$ is approximately proper.
\end{prop}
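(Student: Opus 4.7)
I would argue by induction on $d := \max_{x \in \fX} \dim \Stab_x(\fX)$, the maximum dimension of stabilizer groups appearing in $\fX$. In the base case $d=0$, the affine diagonal assumption forces all stabilizers to be finite, so $\fX$ is Deligne--Mumford with finite inertia and $h$ is the coarse moduli space morphism, which is proper and representable. Thus $h \in \mathrm{AP}$ by \ref{item:proper}.

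For the inductive step, \ref{item:etale} lets me work \'etale-locally on $X$. Luna's \'etale slice theorem \cite[Theorem 4.12]{ahr20}, applied at a closed point of $\fX$ whose stabilizer has dimension $d$, reduces me to the case $\fX = [\Spec A / G]$ with $G$ linearly reductive of dimension $d$ and $X = \Spec A^G$. I now verify the hypothesis of \ref{item:approximate}. Choose a faithful $G$-representation $V$ with trivial $G$-fixed subspace. For each $N$, consider the vector bundle $\cE_N := [\Spec A \times V^{\oplus N} / G] \to \fX$ and let $\cU_N \subset \cE_N$ be a saturated open substack obtained via a suitable GIT semistability condition (a character $\chi$ of $G$ when one exists, or a refined Kempf--Ness-type stratification otherwise) so that every stabilizer on $\cU_N$ has dimension strictly smaller than $d$. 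Fiberwise over a point with stabilizer $H \subseteq G$, the complement of $\cU_N$ in $\cE_N$ sits inside $(V^H)^{\oplus N}$, and since $V^H \subsetneq V$ by the assumption on $V$, its codimension in $V^{\oplus N}$ is at least $N$. Re-indexing $N$ against $n$ then gives the codimension condition of \ref{item:approximate}.

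By construction $\cU_N$ admits a good moduli space $\cU_N \to U_N$, which lies in $\mathrm{AP}$ by the inductive hypothesis, and the induced morphism of good moduli spaces $U_N \to X$ is projective by standard $\chi$-GIT (Kirwan's projective-over-affine structure for GIT quotients of affine varieties), hence in $\mathrm{AP}$ by \ref{item:proper}. Composing via \ref{item:comp} puts $\cU_N \to X$ in $\mathrm{AP}$, and then \ref{item:approximate} yields $h \in \mathrm{AP}$.

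The main obstacle is choosing the framing data so that both (a) every stabilizer on $\cU_N$ has dimension strictly less than $d$, and (b) the induced morphism $U_N \to X$ is projective (or at least proper), particularly when $G$ has no nontrivial characters (e.g.\ $G$ semisimple). This is precisely the ``strictly semistable'' subtlety flagged in the introduction as the new ingredient beyond the Davison--Meinhardt quiver argument. The induction on $d$ is what absorbs strictly semistable framed points: these points necessarily carry stabilizers of dimension strictly less than $d$, so the inductive hypothesis handles the good moduli morphism $\cU_N \to U_N$ even in their presence, and this is what makes the whole approximation scheme go through.
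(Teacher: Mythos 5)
Your overall skeleton coincides with the paper's: induction on the maximal stabilizer dimension, reduction via Luna's \'etale slice theorem and \ref{item:etale} to $[\Spec A/G]$, framing by copies of a representation, the inductive hypothesis applied to the good moduli morphism of the framed stack, properness of the induced map on GIT quotients, and \ref{item:approximate} to conclude. However, there is a genuine gap at the one place you yourself flag as the main obstacle: the construction of the open locus $\cU_N$ when $G$ admits no nontrivial character. Writing ``a refined Kempf--Ness-type stratification otherwise'' is not a construction; for semisimple $G$ the only character is trivial, the GIT-semistable locus for the trivial linearization is everything, and the point $(y,0)$ with $\dim G_y = d$ remains semistable with a $d$-dimensional stabilizer, so your claim that strictly semistable framed points automatically have stabilizers of dimension $<d$ fails. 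The paper's resolution is precisely the step you omit: it first uses \ref{item:proj} to replace $h$ by $\fX\times B\bG_m\to X$, i.e.\ enlarges the group to $G\times\bG_m$, twists the action on $V_n$ by the projection character $\chi$ of the $\bG_m$-factor, and linearizes by $\chi$. This manufactures a nontrivial character for \emph{every} reductive $G$, and Lemma \ref{lem:key} (with the specific representation $V=W\oplus W^*$ whose $T$-weights generate the weight lattice and which has no trivial summand) is exactly what guarantees both that the unstable locus has large codimension and that all semistable stabilizers in $G\times\bG_m$ have codimension $\ge 2$, hence dimension $<d$.

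Two further points in your write-up do not hold as stated. First, the containment of $\cE_N\setminus\cU_N$ in $(V^H)^{\oplus N}$ is unjustified if $\cU_N$ is a GIT-semistable locus: the unstable locus for a character linearization is in general much larger than the fixed locus of positive-dimensional subgroups (the paper instead bounds the codimension by comparing with the $\GL(V)$-unstable locus of non-spanning tuples in $V^{m_n}$). If instead you define $\cU_N$ as the locus of small stabilizers, the containment becomes plausible but $\cU_N$ is then not saturated in general, so it need not admit a good moduli space, let alone one proper over $X$. Second, the properness of $U_N\to X$ via ``projective-over-affine'' GIT again presupposes a nontrivial linearization, which returns you to the same missing ingredient. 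In short, the proposal reproduces the scaffolding of the paper's argument but leaves out its actual technical core, namely \ref{item:proj} together with Lemma \ref{lem:key}.
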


This proposition will be proved in Section \ref{sec:good_AP}.

\begin{proof}[Proof of Theorem \ref{thm:main}]
    It is a direct consequence of Proposition \ref{pro:AP_main} and Proposition \ref{prop:good_AP}.
\end{proof}

\section{Key technical lemma}

The proof of Proposition \ref{prop:good_AP} will be based on the induction on the dimension of the stabilizer groups.
The following technical lemma will be used to reduce the dimension of the stabilizers:

\begin{lem}\label{lem:key}
Let $G$ be a reductive group and $n$ be a positive integer. Then there exists a $G$-representation $(V_n, \mu_n')$ with the following property:
If we define an action $\mu_n$ of $G \times \bG_m$ on $V_n$ by $\mu_n' \cdot \chi $ where $\chi$ denotes the second projection,
the following conditions hold:
\begin{itemize}
    \item Let $V_{n}^{\mathrm{ss}} \subset V_n$ be the semistable locus of the action $\mu_n$ with respect to the linearization $\chi$.
    Then $V_{n} \setminus V_n ^{\mathrm{ss}}$ has codimension greater than $n$.
    
    \item For an affine $G$-variety $(Y, \nu)$, consider $G \times \bG_m$-action on $Y \times V_n$ given by $(\nu \circ \pr_1, \mu_n)$ with the linearization $\chi$.
    Then for any semistable point $(y, v) \in (Y \times V)^{\mathrm{ss}}$, the stabilizer group
    $(G \times \bG_m)_{(y, v)} \subset G \times \bG_m$ has codimension greater than or equal to $2$.
    
\end{itemize}
\end{lem}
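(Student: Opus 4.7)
The plan is to take $V_n$ to be a large direct sum of copies of a carefully chosen self-dual $G$-representation. Starting from any faithful $G$-representation $W_0$, reductivity of $G^0$ lets me split $W_0 = W_0^{G^0} \oplus W_0'$ as $G$-representations (the splitting is $G$-equivariant because $G^0$ is normal in $G$), and I would set $W := W_0' \oplus (W_0')^*$. This $W$ is a faithful self-dual $G$-representation with $W^{G^0} = 0$, and hence also $W^G = 0$. Finally I would take $V_n := W^{\oplus m_n}$ with $m_n := n + \dim G + 1$ and let $\mu_n'$ be the natural action.

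The first condition (codimension of $V_n \setminus V_n^{\ss}$) should follow from the Hilbert--Mumford criterion. Semistability of $V_n$ under $G \times \bG_m$ with linearization $\chi$ is just the condition $0 \notin \overline{G \cdot v}$, so $V_n \setminus V_n^{\ss}$ coincides with the $G$-nullcone, and Hilbert--Mumford writes it as a finite union of $G$-translates of subspaces $(W^+_\lambda)^{\oplus m_n}$ indexed by one-parameter subgroups $\lambda$ of $G^0$, with $W^+_\lambda$ the positive weight subspace. Self-duality of $W$ pairs positive weights of $\lambda$ with negative ones, so $W^+_\lambda \subsetneq W$ for every nontrivial $\lambda$; each stratum $G \cdot (W^+_\lambda)^{\oplus m_n}$ then has codimension at least $m_n - \dim G > n$ in $V_n$.

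For the second condition (codimension of stabilizers), let $(y, v) \in Y \times V_n$ be semistable. One first checks that $v \neq 0$ and that projection to $G$ identifies the stabilizer $(G \times \bG_m)_{(y, v)}$ with the subgroup $H := \{g \in G_y : g v \in \bG_m \cdot v\}$, so that $\dim \mathrm{Stab} = \dim H \leq \dim G_y$. If $\dim G_y < \dim G$ the desired codimension bound follows at once. Otherwise $G^0 \subseteq G_y$, so $y$ is $G^0$-fixed, and evaluation at $y$ sends $(k[Y] \otimes \Sym^d V_n^*)^G$ into $(\Sym^d V_n^*)^{G^0}$; the existence of a $\chi$-semi-invariant nonvanishing at $(y, v)$ therefore forces $v$ itself to be $G^0 \times \bG_m$-semistable in $V_n$. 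I would then rule out $v$ being a $G^0$-eigenvector: the trivial character would put $v$ in $V_n^{G^0} = 0$, and a nontrivial character $\psi$ (necessarily surjecting $G^0$ onto $\bG_m$, since $G^0$ is connected) would give $\overline{G^0 \cdot v} \supseteq \overline{\bG_m \cdot v} \ni 0$, contradicting semistability. Hence $G^0 \not\subseteq H$ and $\dim H \leq \dim G - 1$, which is the required codimension bound.

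The main obstacle is the reduction step in the second condition for $y$ with maximal-dimensional stabilizer: one must pass from joint semistability of $(y, v)$ to a purely representation-theoretic condition on $v$, which is what allows the construction of $V_n$ to depend only on $G$ and $n$ and not on $Y$. Once this reduction is in place, the rest is Hilbert--Mumford together with the self-duality trick and an elementary estimate needed to choose $m_n$ large enough to absorb $\dim G$.
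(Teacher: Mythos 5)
Your construction is essentially the one in the paper: $V_n$ is a large direct sum of copies of a self-dual representation $W \oplus W^*$ with no trivial $G^{\circ}$-summand, the first bullet is a codimension bound for the nullcone that improves with the number of copies, and the second bullet reduces to the case where $G^{\circ}$ fixes $y$ and $v$ is a $G^{\circ}$-eigenvector, which is ruled out using $V_n^{G^{\circ}}=0$ together with the surjectivity of any nontrivial character of the connected group $G^{\circ}$. The only real divergence is in the first bullet, where the paper bounds the unstable locus by mapping $G\times\bG_m$ with finite kernel into $\GL(W\oplus W^*)$ and invoking the spanning criterion for $\det$-semistability, whereas you stratify the nullcone directly via Hilbert--Mumford and use self-duality to bound each stratum; both routes are correct and give the same estimate.
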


\begin{proof}
    We fix a maximal torus $T \subset G$ and $G^{\circ}$ be the connected component of $G$ containing the unit.
Then there exists a $G^{\circ}$-representation $G^{\circ} \hookrightarrow \GL(W')$ such that the set of $T$-weights generates the weight space $\Hom(T, \bG_m)_{\bQ}$ and $W'$ does not contain the trivial representation as a direct summand.
We define a $G$-representation $W$ as the induced representation from $W'$.
We set $V_n$ to be the direct sum of $m_n$ copies of $V = W \oplus W^*$ for large enough integer $m_n$. We claim that $V_n$ satisfies the desired property.

We check the first condition. Consider the following map of algebraic groups
\[
\tau \colon G \times \bG_m \to \GL(V) \times \bG_m \to \GL(V)
\]
where the latter map is given by the multiplication.
It follows from the assumption that for any $1$-PS $\lambda \colon \bG_m \to G \times \bG_m$,
the composition $\tau \circ \lambda$ is non-trivial.
Therefore $\tau$ has a finite kernel.
Also, it implies that the unstable locus of $V_n$ as a $G \times \bG_m$-representation with respect to the linearization $\chi$ is contained in the unstable locus as a $\GL(V)$-representation with respect to the linearization $\det$.
Since the unstable locus of $V_n = V^{m_n}$ as a $\GL(V)$-representation
consists of tuples $(v_1, \ldots, v_{m_n}) \in V^{m_n}$ which does not span $V$,
the complement $V_n \setminus V_n^{\mathrm{ss}}$
has its codimension larger than $n$ if $m_n$ is large enough.
In particular, the first condition is satisfied.

Now we check the second condition.
Take $(y, v) \in  Y \times V_{n}$ and assume that the stabilizer group $(G \times \bG_m)_{(y, v)}$ has codimension at most one.
If $v$ is the origin, 
then the $1$-PS 
 \[
 \{1\} \times \bG_m \xrightarrow[]{ t \to t^{-1}} \{1\} \times \bG_m \hookrightarrow G \times \bG_m
 \]
 destabilizes $(y, v)$.
 Assume now that $v$ is not the origin.
Since the orbit $(G^{\circ} \times \bG_m) \cdot (y, v)$ has dimension one, we have the equality
\[
(G^{\circ} \times \bG_m) \cdot (y, v) = \{ y \} \times (\bG_m \cdot v).
\]
We claim the unstability of $(y, v)$.
Let $\mu \colon G^{\circ} \to \bG_m$ be the character corresponding to the $G$-action on $\bC \cdot v$.
Note that $\mu \neq 0$, since $V_n$ does not contain the trivial $G^{\circ}$-representation.
Therefore we can take a $1$-PS $\lambda \colon \bG_m \to G^{\circ}$ such that $\mu \circ \lambda$ has a positive weight.
We take a $1$-PS $\tilde{\lambda} = (\lambda^l, z^{-1}) \colon \bG_m \to G \times \bG_m$.
If $l\geq 1$,
it is clear that $\tilde{\lambda}$ destabilizes the point $(y, v)$.
\end{proof}

\section{Proof of Proposition \ref{prop:good_AP}}\label{sec:good_AP}

We will prove Proposition \ref{prop:good_AP} by the induction on the maximal dimension of the stabilizer groups.

Assume that the maximal dimension of the stabilizer of $\fX$ is zero.
By using \cite[Theorem 4.12]{ahr20} and \ref{item:etale}, we may assume $\fX = [Y / G]$ where $\fX$ is an affine variety and $G$ is a finite group.
In this case, the map $h \colon \fX \to X$ is proper and represented by Deligne--Mumford stacks, hence the claim follows from \ref{item:proper}.

Assume now that we have proved the statement for stacks whose maximal stabilizer dimension is less than $d$ and the maximal stabilizer dimension of $\fX$ is $d$.
By using \cite[Theorem 4.12]{ahr20} and \ref{item:etale}, we may assume $\fX = [Y / G]$ where $\fX$ is an affine variety and $G$ is a reductive group of dimension $d$.
To prove that the map $h \colon \fX \to X$ is approximately proper, by \ref{item:proj}, it is enough to show the composite $h' \colon \fX \times B \bG_m \to \fX \to X$ is approximately proper.

For an integer $n$, take a $G \times \bG_m$-representation $V_n$ in Lemma \ref{lem:key}.
Consider the following diagram:
\[
    \xymatrix@C=80pt{
{[(Y \times V_n)^{\mathrm{ss}} / G \times  \bG_m]}
\ar[d]^-{h_n}
\ar[r]^-{p_n}   
& {[Y / G] \times B \bG_m}
\ar[d]^-{h'} \\
{(Y \times V_n)^{\mathrm{ss}} \GIT G \times  \bG_m}
\ar[r]^-{\bar{p}_n} 
& {Y \GIT G.}
    }
\]
Note that $\bar{p}_n$ is proper since it is given by the variation of GIT.
Also, by the second property of $V_n$ in Lemma \ref{lem:key} and the induction hypothesis, we see that $h_n$ is approximately proper.
Therefore the map $h' \circ p_n =  \bar{p}_n \circ h_n$ is approximately proper by  \ref{item:comp}.
Since we have the inclusion $Y \times V_n^{\mathrm{ss}} \subset (Y \times V_n)^{\mathrm{ss}}$, using the first property of $V_n$ in Lemma \ref{lem:key},
we conclude that $h'$  is approximately proper by \ref{item:approximate}.

\section{Applications}

\subsection{Purity of moduli spaces}

The following statement is an immediate consequence of Theorem \ref{thm:main}:

\begin{thm}\label{thm:pure}
    Let $\fX$ be a smooth Artin stack with affine diagonal admitting a good moduli space $h \colon \fX \to X$.
    Then the mixed Hodge complex $h_* \bQ_{\fX}$ is pure.
    In particular, there exists an isomorphism
    \[
    h_* \bQ_{\fX} \cong \bigoplus_{i} {}^p \cH^i(h_* \bQ_{\fX})[-i].
    \]
\end{thm}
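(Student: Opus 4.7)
The plan is to obtain the theorem as a direct consequence of Theorem~\ref{thm:main}, combined with two standard ingredients: (a) on a smooth stack the constant sheaf is pure, and (b) Saito's decomposition theorem for pure complexes of mixed Hodge modules.

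First, I would upgrade $\bQ_\fX$ to the constant mixed Hodge complex in $D_H^+(\fX)$ and argue it is pure. Since $\fX$ is smooth, this reduces, via smooth descent built into Tubach's six-functor formalism on Artin stacks, to the classical statement that for a smooth variety $U$ of dimension $d$ the shifted constant sheaf $\bQ_U[d]$ underlies a pure Hodge module of weight $d$. (If $\fX$ has components of differing dimensions, one simply works component-by-component; the resulting object remains pure.)

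Second, I would apply Theorem~\ref{thm:main}: because $h$ preserves weights, the pushforward $h_*\bQ_\fX$ is pure in $D_H^+(X)$. The desired splitting then follows from Saito's decomposition theorem, which asserts that every pure object $\mathcal{M} \in D_H^+(X)$ is (non-canonically) isomorphic to $\bigoplus_i {}^p\cH^i(\mathcal{M})[-i]$; applied to $\mathcal{M} = h_*\bQ_\fX$, this produces the isomorphism in the theorem statement.

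The only technical point to verify, and thus the main potential obstacle, is that the good moduli space $X$ is in general only a finite-type separated algebraic space rather than a scheme, so one must know that Saito's decomposition theorem is available in that generality. This is essentially routine: the semisimplicity of the perverse cohomologies and the existence of the splitting can be checked étale locally on $X$, so the scheme case (Saito's original theorem) plus étale descent covers the algebraic-space case needed here.
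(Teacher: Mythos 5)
Your proposal is correct and follows essentially the same route as the paper, which simply records Theorem \ref{thm:pure} as an immediate consequence of Theorem \ref{thm:main}: purity of $\bQ_\fX$ on the smooth stack, weight preservation of $h_*$, and the splitting of pure complexes. Your extra remarks on smooth descent for purity of the constant sheaf and on extending the decomposition theorem to algebraic spaces are exactly the (routine) points being suppressed in the paper.
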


Theorem \ref{thm:pure} implies the following statement:

\begin{cor}\label{cor:pure}
    We let $\fX$ be one of the following stacks:
    \begin{enumerate}[label=\upshape{(\Alph*)}]
        \item  \label{item:delpezzo} Let $S$ be a smooth del pezzo surface, $H$  an ample divisor and take $\alpha \in \H^{\bullet}(S)$.
        Set $\fX = \fM_{S, \alpha}^{H-\mathrm{ss}}$ the moduli stack of $H$-semistable sheaves on $S$ with Chern character $\alpha$.

        \item \label{item:Bundle} Let $C$ be a smooth projective curve and $G$ be a reductive group. Set $\fX = \mathcal{B}\mathrm{un}^{\mathrm{ss}}_G(C)$ the moduli stack of semistable $G$-bundles on $C$.

        \item \label{item:Higgs} Let $C$ and $G$ be as in \ref{item:Bundle} and $L$ be a line bundle with 
        $\H^{0} (C, L \otimes \omega_C^{-1}) > 0$.
        Set $\fX = \mathcal{H}\mathrm{iggs}^{L, \mathrm{ss}}_G(C)$ the moduli stack of semistable $L$-twisted $G$-Higgs bundles on $C$.
    \end{enumerate}

    Then the Borel--Moore homology $\H_{\bullet}^{\mathrm{BM}}(\fX)$ is pure.
\end{cor}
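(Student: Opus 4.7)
The plan is to apply Theorem~\ref{thm:pure} in each of the three cases and then translate purity of $h_*\bQ_\fX$ into purity of Borel--Moore homology via Poincar\'e duality on the smooth stack $\fX$.

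First I would verify the hypotheses of Theorem~\ref{thm:pure}: that each $\fX$ is a smooth finite-type Artin stack with affine diagonal admitting a good moduli space $h \colon \fX \to X$. Smoothness in (A) follows from $\Ext^2$-vanishing via Serre duality, using that $\omega_S^{-1}$ is ample on a del Pezzo surface; smoothness in (B) is classical (a curve has cohomological dimension one); smoothness in (C) uses deformation theory of Higgs bundles, with the assumption $\H^0(C, L \otimes \omega_C^{-1}) > 0$ supplying exactly the slope-comparison input needed to force obstructions to vanish for semistable Higgs bundles. The affine diagonal is automatic in all three cases, since the $\mathrm{Isom}$-scheme of two families is an open subscheme of the affine $\Hom$-scheme. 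Existence of good moduli spaces in (A), (B), (C) is known from work of Alper, Halpern-Leistner, Heinloth and their collaborators.

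With the hypotheses established, Theorem~\ref{thm:pure} gives that $h_*\bQ_\fX$ is pure of weight zero on $X$. Since $\fX$ is smooth of some dimension $d$, Poincar\'e duality yields $\HBM_i(\fX) \cong \H^{2d-i}(X, h_*\bQ_\fX)(d)$, so it suffices to show that $\H^\bullet(X, h_*\bQ_\fX)$ is pure of the expected weights. In cases (A) and (B) the good moduli space $X$ is projective, so the cohomology of a pure complex on $X$ is automatically pure of the expected weight, which settles these two cases.

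The main obstacle will be case (C), where the Higgs moduli space $X$ is only quasi-projective, but carries a proper Hitchin map $\chi \colon X \to A$ to an affine Hitchin base. I would handle it by exploiting the $\bG_m$-action on $\fX$ scaling the Higgs field: under the hypothesis $\H^0(C, L \otimes \omega_C^{-1}) > 0$, all limits as $t \to 0$ exist and land in a closed substack $\fX_0 \subset \fX$ whose good moduli space is the nilpotent cone $\chi^{-1}(0)$, which is projective. The stacky Bialynicki-Birula / $\Theta$-stratification machinery of Halpern-Leistner then produces an isomorphism $\H^\bullet(\fX, \bQ) \simeq \H^\bullet(\fX_0, \bQ)$, and the right-hand side is pure by the argument already used for (A) and (B) applied to $\fX_0$. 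This reduces (C) to the proper situation and completes the proof.
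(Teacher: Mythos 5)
Your strategy coincides with the paper's in case (B) and in the positive-dimensional part of case (A), but there are two genuine gaps. The first is in case (A): the corollary allows arbitrary $\alpha \in \H^{\bullet}(S)$, and your blanket smoothness claim fails when $\alpha$ is the class of zero-dimensional sheaves. For a punctual sheaf $F$, Serre duality gives $\Ext^2(F,F) \cong \Hom(F, F \otimes \omega_S)^* \cong \End(F)^* \neq 0$ (twisting by $\omega_S$ does not move a punctual sheaf), so the moduli stack is obstructed --- already the stack of length-$n$ sheaves on a surface is singular for $n \geq 2$. Theorem \ref{thm:pure} is therefore unavailable there, and the paper has to import a genuinely different input: purity in the zero-dimensional case is deduced from \cite[Theorem 7.1.6]{kv19} together with the compatibility of the cohomological Hall algebra with mixed Hodge structures. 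Your proposal has no replacement for this step.

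The second gap is the final step of case (C). After retracting onto the substack $\fX_0$ over the nilpotent cone, you propose to conclude purity of $\H^{\bullet}(\fX_0)$ ``by the argument already used for (A) and (B) applied to $\fX_0$''; but that argument needs the source stack to be smooth so that Theorem \ref{thm:pure} applies, and the stacky nilpotent cone is singular. What would actually close the argument is a two-sided weight estimate ($\H^k(\fX)$ has weights $\geq k$ by smoothness of $\fX$, while $\H^k(\fX_0)$ has weights $\leq k$ because $\mathrm{Hit}^{-1}(0)$ is proper and the good moduli pushforward does not raise weights), but the second half of that is again a weight statement about a good moduli morphism that must be extracted from Theorem \ref{thm:main}, not from Theorem \ref{thm:pure}; you would also need to justify the equivalence $\H^{\bullet}(\fX) \cong \H^{\bullet}(\fX_0)$ at the level of Artin stacks, which the scheme-level Bialynicki-Birula theory does not give for free. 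The paper avoids the retraction entirely: it pushes the pure complex $h_* \bQ$ forward along the proper Hitchin map to obtain a pure complex on the Hitchin base $B$, and then applies the weight-preservation part of Theorem \ref{thm:main} to the good moduli morphism $[B/\bG_m] \to \pt$ attached to the contracting $\bG_m$-action on $B$. That route is shorter and stays entirely within the formalism the paper has already set up, whereas your route, once repaired, amounts to the classical Simpson--Hausel semiprojectivity argument.
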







\begin{proof}
    We first deal with the stacks \ref{item:delpezzo}.
    If $\alpha$ corresponds to non-zero dimensional sheaves, 
    it easily follows that the stack $\fM_{S, \alpha}^{H-\mathrm{ss}}$ is smooth, hence the claim follows by the properness of the good moduli space and Theorem \ref{thm:pure}.
    If $\alpha$ corresponds to zero-dimensional sheaves,
    the claim follows from \cite[Theorem 7.1.6]{kv19} together with the fact that the cohomological Hall algebra preserves the mixed Hodge structures.
    
    The statement for the stacks \ref{item:Bundle} is obvious from Theorem \ref{thm:pure}, the smoothness of $\mathcal{B}\mathrm{un}^{\mathrm{ss}}_G(C)$ and the properness of the  good moduli spaces.
    
    Now we prove the statement for stacks \ref{item:Higgs}.
    Firstly it follows from \cite[Proposition 5.5]{her23} that $\mathcal{H}\mathrm{iggs}^{L, \mathrm{ss}}_G(C)$ is smooth.
    Consider the following composite
    \[
        \mathcal{H}\mathrm{iggs}_{G}^{L, \mathrm{ss}}(C) \xrightarrow[]{h} \mathrm{Higgs}_{G}^{L, \mathrm{ss}}(C) \xrightarrow[]{\mathrm{Hit}} B
    \]
    where the former map is the good moduli space morphism and $\mathrm{Hit}$ is the Hitchin fibration.
    By Theorem \ref{thm:pure}, the complex $h_* \bQ_{\mathcal{H}\mathrm{iggs}_{G}^{L, \mathrm{ss}}(C)}$ is pure.
    Further, since the map $\mathrm{Hit}$ is proper by \cite[Thm.II.4]{fal93}, the complex $(\mathrm{Hit} \circ h)_* \bQ_{\mathcal{H}\mathrm{iggs}_{G}^{L, \mathrm{ss}}(C)}$ is pure.
    Consider the contracting $\bG_m$-action on $B$, whose good moduli space is the point.
    Applying Theorem \ref{thm:main} for the $\bG_m$-action on $B$, we conclude that the cohomology 
    \[
    \H^{\bullet}\left({\mathcal{H}\mathrm{iggs}_{G}^{L, \mathrm{ss}}(C)}\right) \cong \H^{\bullet}\left(B, (\mathrm{Hit} \circ h)_* \bQ_{\mathcal{H}\mathrm{iggs}_{G}^{L, \mathrm{ss}}(C)}\right)
    \]
    is pure as desired.
\end{proof}

\begin{rmk}
    The purity for the stacks \ref{item:delpezzo} when $\alpha$ corresponds to a non-zero dimensional sheaves is already proved in \cite[Theorem 0.11]{KLMP}.
    Further, they prove the tautological generation of the cohomology ring, which is not available with our method.
\end{rmk}

\begin{rmk}
    The purity for the stacks \ref{item:Bundle} essentially goes back to  the work of Atiyah--Bott \cite{AB83} (at least when $G$ is connected).
    Namely, they proved that  the rational cohomology of the moduli stack $\mathcal{B}\mathrm{un}_G(C)$ of (not necessarily semistable) $G$-bundles over $C$ is freely generated by tautological classes and the natural map
    \[
    \H^{\bullet}(\mathcal{B}\mathrm{un}_G(C)) \to \H^{\bullet}(\mathcal{B}\mathrm{un}^{\mathrm{ss}}_G(C))
    \]
    is surjective.
\end{rmk}

\begin{rmk}
    One can weaken the assumption in \ref{item:Higgs} to $\deg L > 2 g(C) -2$ for the smoothness of $\mathcal{H}\mathrm{iggs}_{G}^{L, \mathrm{ss}}(C)$; this follows from a deformation theory argument,
    and the detail will be explained elsewhere.
    In particular, the same conclusion holds for this generality.
\end{rmk}

\begin{rmk}
The moduli stacks \ref{item:delpezzo}, \ref{item:Bundle} and \ref{item:Higgs} are naturally realized as the open strata of a $\Theta$-stratification of a stack $\widetilde{\fX}$. The localization sequence with respect to the $\Theta$-stratification together with Corollary \ref{cor:pure} implies
the purity of $\H^{\mathrm{BM}}_{\bullet}(\widetilde{\fX})$ and
that the Kirwan map
\[
\H^{\mathrm{BM}}_{\bullet}(\widetilde{\fX}) \to \H^{\mathrm{BM}}_{\bullet}(\fX)
\]
is surjective. See \cite[Corollary 4.1]{Hlp15} for a related discussion.
\end{rmk}

\subsection{Cohomological Ehresmann fibration theorem}

Theorem \ref{thm:main} can be applied to the study of the cohomology of smooth stacks in family.
The following statement is a direct consequence of Corollary \ref{cor:van_BC}. See \cite[Theorem 3.8, Corollary 3.9]{deINFH} for a related discussion, where they prove a similar statement for the (intersection) cohomology of the good moduli spaces.

\begin{thm}
    Let $S$ be a connected finite type scheme and $\fX \to S$ be a family of smooth stacks with affine diagonal. 
    Assume that $\fX$ admits a good moduli space $X$ which is proper over $S$.
    Then for any $s, t \in S$, there exists a (non-canonical) isomorphism between the cohomology of the fibres $\H^*(\fX_s)$ and $\H^*(\fX_t)$ preserving the ring structure.
    Further, their Hodge numbers coincide.
\end{thm}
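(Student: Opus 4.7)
The plan is to push forward $\bQ_\fX$ along the composite $\pi \colon \fX \xrightarrow{h} X \xrightarrow{p} S$, where $p$ is proper, and to show that the resulting complex $\pi_* \bQ_\fX$ on $S$ is a direct sum of shifted local systems; the statement will then follow from parallel transport. First, applying Theorem \ref{thm:main} to $h$ and combining with proper base change for $p$ gives, for each $s \in S$, a natural identification of the fiber $(\pi_* \bQ_\fX)|_s$ with the cohomology $\H^*(\fX_s, \bQ)$. This already reduces the problem to a statement about the local structure of $\pi_* \bQ_\fX$ on $S$.

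Second, I would show $\pi_* \bQ_\fX$ is pure as a mixed Hodge complex on $S$. After reducing to the case where $S$ is smooth (so that $\fX$ itself is smooth) via a smooth cover, Theorem \ref{thm:pure} applies and yields that $h_* \bQ_\fX$ is pure; Saito's theorem then preserves purity under the proper pushforward $p_*$, so $\pi_* \bQ_\fX$ is pure on $S$. The decomposition theorem in the Hodge module setting therefore gives a splitting $\pi_* \bQ_\fX \cong \bigoplus_i \mcL_i[-i]$ into semisimple pure summands $\mcL_i$.

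The main obstacle, which I expect to be the heart of the proof, is to show that each $\mcL_i$ is actually a shifted local system on all of $S$ (i.e.\ a variation of pure Hodge structure), rather than an intersection cohomology sheaf supported on a proper closed subvariety. The smoothness of the family $\fX \to S$ should force the cohomology sheaves of $\pi_* \bQ_\fX$ to be locally constant on $S$; Corollary \ref{cor:van_BC} — base change for good moduli morphisms along an inclusion $\{s\} \hookrightarrow S$ in the relative setting — supplies the precise technical input for this, ruling out the appearance of summands with strictly smaller support.

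Once each $\mcL_i$ is recognized as a variation of pure Hodge structure on the connected base $S$, the conclusions are standard. Since $S$ is connected, $S^{\an}$ is path-connected, so parallel transport along a path from $s$ to $t$ gives a non-canonical isomorphism $\H^*(\fX_s) \cong \H^*(\fX_t)$ of graded vector spaces; this isomorphism respects the cup product because multiplication on cohomology is a morphism of the underlying local system of graded rings; and the Hodge numbers of a variation of pure Hodge structure are locally constant, hence constant on the connected $S$, giving the final assertion.
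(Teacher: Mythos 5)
Your overall skeleton --- identify the stalks of $\pi_*\bQ_{\fX}$ with $\H^*(\fX_s)$ via base change, establish local constancy, and finish by parallel transport --- is the argument the paper intends (the paper offers no more than ``direct consequence of Corollary \ref{cor:van_BC}''). But two of your steps need repair. First, the purity/decomposition detour does not work as written. If $S$ is singular then $\fX$ is singular (the morphism $\fX \to S$ is smooth, not $\fX$ itself), so Theorem \ref{thm:pure} does not apply, and ``reducing to $S$ smooth via a smooth cover'' is not available: a smooth \emph{surjective} cover of a singular $S$ is still singular, while a resolution or normalization is proper birational rather than a cover and can disconnect $S$ (e.g.\ two lines meeting at a point), destroying exactly the connectedness you need for parallel transport. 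The correct reduction is to join $s$ and $t$ by a chain of irreducible curves and pull back to their normalizations; alternatively, drop global purity of $\pi_*\bQ_{\fX}$ altogether --- it is not needed for the statement. For the Hodge numbers it suffices to know that each \emph{fibre} $\H^*(\fX_s)$ is pure, which follows from Theorem \ref{thm:pure} applied to the smooth stack $\fX_s$ with proper good moduli space $X_s$; a local system of mixed Hodge structures whose fibres are pure of fixed weight is a variation of pure Hodge structure, and its Hodge numbers are locally constant.

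Second, the step you yourself flag as the heart of the proof --- that Corollary \ref{cor:van_BC} forces the cohomology sheaves of $\pi_*\bQ_{\fX}$ to be local systems --- is asserted but not executed, and the missing mechanism is the actual content. It is this: after restricting to a smooth curve with coordinate $g$ through a given point of $S$, smoothness of $\fX \to S$ gives $\psi_{g\circ \pi}\bQ_{\fX}\cong \bQ_{\fX_0}$ (nearby cycles commute with smooth pullback, and $\psi_g\bQ$ on the curve is the constant sheaf on the central fibre). Corollary \ref{cor:van_BC} together with base change then identifies $\psi_{\bar g}\,\pi_*\bQ_{\fX}\cong \pi_{0,*}\bQ_{\fX_0}\cong i^*\pi_*\bQ_{\fX}$, i.e.\ the comparison map $i^*\pi_*\bQ_{\fX}\to \psi_{\bar g}\,\pi_*\bQ_{\fX}$ is an isomorphism, which is precisely the criterion for the cohomology sheaves to be constant near that point. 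Without spelling this out, the claim that no summand of smaller support appears is unsupported. With these two repairs the rest of your argument (parallel transport, compatibility with cup product, constancy of Hodge numbers on a connected base) goes through as you describe.
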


\appendix

\section{Base change theorem for the vanishing cycle functors}

We will discuss the commutation between the vanishing cycle functor and the push-forward:

\begin{prop}
    Let $f \colon \fX \to \fY$ be a morphism of finite type stacks and  $g \colon \fY \to \bA^1$ be a regular function.
    Set $\fX_0 \coloneqq (g \circ f)^{-1}(0)$ and $\fY_0 \coloneqq g^{-1}(0)$ and
     $f_0 \colon \fX_0 \to \fY_0$ denote the restriction of $f$.
    Assume that the base change theorem holds for $f$ and $\cF \in D^+_c(\fX)$ be a constructible complex.
    Then the natural maps 
    \[
    \varphi_g f_* \cF \to f_{0, *} \varphi_{g \circ f} \cF, \quad
     \psi_g f_* \cF \to f_{0, *} \psi_{g \circ f} \cF
    \]
    are invertible.
    
\end{prop}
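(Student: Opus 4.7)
The plan is to reduce the two invertibility statements to the base change hypothesis for $f$ along two sorts of finite-type morphisms: the closed immersion $i \colon \fY_0 \hookrightarrow \fY$, and certain finite étale cyclic covers of $U \coloneqq \fY \setminus \fY_0$. Throughout let $i_\fX \colon \fX_0 \hookrightarrow \fX$ and $j \colon U \hookrightarrow \fY$, $j_\fX \colon U_\fX \hookrightarrow \fX$ denote the complementary inclusions.

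First I would reduce the vanishing cycles statement to the nearby cycles statement. There is a natural distinguished triangle $i^* \cF' \to \psi_g \cF' \to \varphi_g \cF' \xrightarrow{+1}$ in $D^+_c(\fY_0)$ for every $\cF' \in D^+_c(\fY)$, and an analogous one on $\fX_0$; the two triangles are compatible under the comparison maps in the statement. Since $i$ is finite type, the base change hypothesis for $f$ specialises to $i^* f_* \cF \xrightarrow{\sim} f_{0,*} i_\fX^* \cF$. Thus the $\varphi$-statement follows from the $\psi$-statement via the five-lemma on the triangles.

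Second, I would express $\psi_g$ as a filtered colimit over finite-type approximations to the universal cover of $\bG_m$. For each positive integer $n$ set $\pi_n \colon \bG_m \to \bG_m$, $z \mapsto z^n$, let $\tilde U_n \coloneqq U \times_{\bG_m, g|_U, \pi_n} \bG_m$ and let $\tilde j_n \colon \tilde U_n \to \fY$ be the natural composition; each $\tilde j_n$ is a finite-type morphism. With these I would use the standard identification
\[
\psi_g \cF' \;\cong\; \colim_{n} \, i^* (\tilde j_n)_* \tilde j_n^* \cF',
\]
the colimit being over divisibility, and analogously on $\fX$. For each fixed $n$, the natural map
\[
i^* (\tilde j_n)_* \tilde j_n^* f_* \cF \longrightarrow f_{0,*}\, i_\fX^*\, (\tilde j_n^\fX)_* (\tilde j_n^\fX)^* \cF
\]
then becomes an isomorphism by successive applications of the base change theorem for $f$: once for the finite-type morphism $\tilde j_n$ (to commute $(\tilde j_n)_*$ and $\tilde j_n^*$ across $f_*$ with its base change) and once for the closed immersion $i$ (also finite type). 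Passing to the colimit over $n$ then yields the desired isomorphism for $\psi_g$.

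The hard part will be Step 2: justifying the colimit formula for $\psi_g$ in the stacky constructible setting and checking that the colimit commutes with $f_{0,*}$. The mechanism is quasi-unipotence of the monodromy on each cohomology sheaf $\mH^k \psi_g \cF$: because $\cF$ is constructible and we work over $\bQ$ in the analytic topology, on every stratum the eigenvalues of monodromy are roots of unity, and since there are only finitely many strata relevant in each cohomological degree the tower $\{i^* (\tilde j_n)_* \tilde j_n^* \cF\}_n$ stabilises on $\mH^k$ for $n$ sufficiently divisible. This degree-by-degree stabilisation (together with the fact that $\cF \in D^+_c$ so we can truncate and induct) lets one commute the colimit past $f_{0,*}$ and conclude. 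Once that bookkeeping is set up, the proof is purely a diagram-chase feeding the hypothesis of the paper into the finite-type pieces $\tilde j_n$ and $i$.
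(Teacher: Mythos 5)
Your overall strategy (reduce $\varphi$ to $\psi$ by the triangle $i^*\to\psi\to\varphi$, then feed finite-type morphisms over $\fY$ into the base change hypothesis for $f$) is the right one and matches the paper's. The gap is in your Step 2: the ``standard identification'' $\psi_g \cF' \cong \colim_n i^* (\tilde j_n)_* \tilde j_n^* \cF'$ over the finite cyclic covers is false in general. Test it on $\fY=\bA^1$, $g=\id$, with $\cF'$ the extension by zero of a rank-two local system on $\bG_m$ whose monodromy $T$ is a single unipotent Jordan block. Then $\psi_g\cF'$ is two-dimensional in degree $0$, while $i^*(\tilde j_n)_*\tilde j_n^*\cF'$ is the stalk complex $[V\xrightarrow{T^n-1}V]$, whose $\mH^0=\ker(T^n-1)$ and $\mH^1=\Coker(T^n-1)$ are each one-dimensional for every $n$; the transition maps are isomorphisms, so the colimit stabilises to the wrong answer (even the Euler characteristic is off). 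The point is that the $n$-fold covers only semisimplify the monodromy: quasi-unipotence guarantees the eigenvalues are roots of unity, which is why passing to finite covers reduces you to \emph{unipotent} monodromy, but it does not make the finite covers see the Jordan block structure, and $\colim_n\ker(T^n-1)$ is the sum of honest (not generalised) eigenspaces.

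The missing ingredient is a separate treatment of the unipotent part. This is exactly where the paper goes: after reducing (via the definition of $\psi$ in the reference, which is in effect your finite-cover reduction) to the unipotent nearby cycle $\psi^{\mathrm{uni}}$, it uses the formula $\psi^{\mathrm{uni}}(\cF)\simeq i^*j_*j^*\cF\otimes_{\bQ[t^1]}\bQ$, where $\bQ[t^1]\simeq\H^\bullet(\bG_m)$ acts on $i^*j_*j^*\cF$; since this tensor product is a functorial (co)fibre construction compatible with $f_*$, the statement reduces to invertibility of $i'^*j'_*j'^*f_*\cF\to f_{0,*}i^*j_*j^*\cF$, which follows from base change for $f$ along the finite-type maps $j'$ and $i'$. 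Alternatively you could repair your argument \`a la Beilinson by also tensoring $j^*\cF$ with the unipotent local systems $\bQ[t]/t^a$ and taking a colimit in $a$ as well as in $n$; either way, the colimit over finite \'etale covers alone does not compute $\psi_g$.
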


\begin{proof}
    It is enough to prove the claim for the nearby cycle functor.
    By the definition of the nearby cycle functor \cite[Definition 3.31]{Tub2},
    we may further reduce to the unipotent part of the nearby cycle functor $\psi^{\mathrm{uni}}$.
    Consider the following diagram:
    \[
    \xymatrix{
    {\fX_0}
    \ar[r]^-{i}
    \ar[d]^-{f_0}
    & {\fX} \ar[d]^-{f}
    & {\fX_{\neq 0}} \ar[l]_-{j} \ar[d]^-{f_{\neq 0}} \\
    {\fY_0}
    \ar[r]^-{i'}
    & {\fY}
    & {\fY_{\neq 0}} \ar[l]_-{j'}
    }
    \]
    where $j$ and $j'$ are complementary open inclusion.
    Recall that the complex $i^* j_* j^* \cF$ is equipped with the action of $\H^{\bullet}( \bG_m) \simeq \bQ[t^1]$ with $\deg t^1 = 1$ and we have a formula
    \[
    \psi^{\mathrm{uni}}(\cF) \simeq i^* j_* j^* \cF \otimes_{\bQ[t^1]} \bQ.
    \]
    In particular, it is enough to prove that the base change transform
    \[
    i'^* j'_* j'^* f_* \cF \to f_{0, *} i^* j_* j^* \cF
    \]
    is invertible. However this follows from the assumption that $f$ satisfies the base change theorem.
\end{proof}

The above proposition together with Theorem \ref{thm:main} implies the following corollary:

\begin{cor}\label{cor:van_BC}
    Let $\fX$ be an Artin stack with affine diagonal admitting a good moduli space $h \colon \fX \to X$.
    Let $g \colon \fX \to \bA^1$ be a regular function and $\bar{g} \colon X \to \bA^1$ be the induced morphism.
    Set $\fX_0 \coloneqq g^{-1}(0)$, $X_0 \coloneqq \bar{g}^{-1}(0)$ and $h_0 \colon \fX_0 \to X_0$ be the restriction of $h$.
    Then for any $\cF \in D^+_c(\fX)$, the natural maps
    \[
        \varphi_{\bar{g}} h_* \cF \to h_{0, *} \varphi_{g } \cF, \quad
         \psi_{\bar{g}} h_* \cF \to h_{0, *} \psi_{g} \cF
        \]
        are invertible.
\end{cor}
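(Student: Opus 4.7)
The plan is that this corollary should follow by direct specialization of the preceding proposition to the case $f = h$, once a small amount of setup is checked. First, I would verify that the hypotheses of the proposition line up. Since $h\colon \fX \to X$ is a good moduli space morphism, the natural map $\cO_X \to h_*\cO_\fX$ is an isomorphism, so any regular function $g\colon \fX \to \bA^1$ factors uniquely through $h$; this produces $\bar g\colon X \to \bA^1$ with $\bar g \circ h = g$. Consequently $\fX_0 = g^{-1}(0) = h^{-1}(\bar g^{-1}(0)) = h^{-1}(X_0)$, so $\fX_0 = X_0 \times_X \fX$ and $h_0\colon \fX_0 \to X_0$ is precisely the base change of $h$ along $X_0 \hookrightarrow X$. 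Thus, with the identification $(f,g) \leftrightarrow (h,\bar g)$, the objects $\fY_0, \fX_0, f_0$ in the preceding proposition become $X_0, \fX_0, h_0$ in the notation of the corollary, and $g \circ f$ becomes $\bar g \circ h = g$.

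Next, I would feed in Theorem \ref{thm:main}, which asserts that $h$ satisfies the base change theorem. This is exactly the hypothesis required by the previous proposition applied with $f = h$. Its conclusion is then the invertibility of
\[
\varphi_{\bar g}\, h_* \cF \to h_{0,*}\, \varphi_{\bar g \circ h}\, \cF, \qquad \psi_{\bar g}\, h_* \cF \to h_{0,*}\, \psi_{\bar g \circ h}\, \cF,
\]
and substituting $\bar g \circ h = g$ produces precisely the two isomorphisms in the statement of the corollary.

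The proof is therefore purely formal at this stage: there is no new mathematical obstacle beyond matching notation. All the substance has been pushed into Theorem \ref{thm:main} (which encodes that good moduli space morphisms are approximately proper, and hence satisfy the base change theorem) and into the preceding proposition (which converts base change for the morphism into base change for nearby and vanishing cycles, via the formula $\psi^{\mathrm{uni}}(\cF) \simeq i^* j_* j^* \cF \otimes_{\bQ[t^1]} \bQ$ and the reduction to the unipotent nearby cycles). The only point worth double-checking during write-up is that the factorization $g = \bar g \circ h$ is indeed available in the setting of affine-diagonal stacks with good moduli space, which follows from the defining property of a good moduli space.
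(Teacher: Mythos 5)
Your proposal is correct and matches the paper's intended argument exactly: the paper derives this corollary by applying the preceding proposition with $f = h$ and invoking Theorem \ref{thm:main} for the base change hypothesis, with the factorization $g = \bar{g} \circ h$ coming from the defining property $\cO_X \xrightarrow{\sim} h_* \cO_{\fX}$ of a good moduli space, just as you describe.
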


\begin{ACK}
    The author thanks Andr\'es Ib\'a\~nez N\'u\~nez and Swan Tubach for a helpful discussion related to this work.
    He also thanks Andres Fernandez Herrero for useful comments on the preliminary version of this paper.
    T.K. was supported by JSPS KAKENHI Grant Number 23K19007.
\end{ACK}

\bibliographystyle{amsalpha}
\bibliography{biblio}

\providecommand{\bysame}{\leavevmode\hbox to3em{\hrulefill}\thinspace}
\providecommand{\MR}{\relax\ifhmode\unskip\space\fi MR }
\providecommand{\MRhref}[2]{%
  \href{http://www.ams.org/mathscinet-getitem?mr=#1}{#2}
}
\providecommand{\href}[2]{#2}
\begin{thebibliography}{KLMP24}

\bibitem[AB83]{AB83}
Michael~Francis Atiyah and Raoul Bott, \emph{The {Y}ang-{M}ills equations over {R}iemann surfaces}, Philosophical Transactions of the Royal Society of London. Series A, Mathematical and Physical Sciences \textbf{308} (1983), no.~1505, 523--615.

\bibitem[ACV03]{ACV03}
Dan Abramovich, Alessio Corti, and Angelo Vistoli, \emph{Twisted bundles and admissible covers}, Communications in Algebra \textbf{31} (2003), no.~8, 3547--3618.

\bibitem[AHR20]{ahr20}
Jarod Alper, Jack Hall, and David Rydh, \emph{A {L}una {\'e}tale slice theorem for algebraic stacks}, Annals of Mathematics \textbf{191} (2020), no.~3, 675--738.

\bibitem[dCHN23]{deINFH}
Mark~Andrea de~Cataldo, Andres~Fernandez Herrero, and Andr{\'e}s~Ib{\'a}{\~n}ez N{\'u}{\~n}ez, \emph{Relative \'etale slices and cohomology of moduli spaces}, arXiv preprint arXiv:2307.00350 (2023).

\bibitem[DM20]{dm20}
B.~Davison and S.~Meinhardt, \emph{Cohomological {D}onaldson-{T}homas theory of a quiver with potential and quantum enveloping algebras}, Invent. Math. \textbf{221} (2020), no.~3, 777--871. \MR{4132957}

\bibitem[Fal93]{fal93}
Gerd Faltings, \emph{Stable {G}-bundles and projective connections}, J. Algebraic Geom \textbf{2} (1993), no.~3, 507--568.

\bibitem[Her23]{her23}
Andres~Fernandez Herrero, \emph{On automorphisms of semistable {G}-bundles with decorations}, Advances in Geometry \textbf{23} (2023), no.~3, 389--400.

\bibitem[HL15]{Hlp15}
Daniel Halpern-Leistner, \emph{$\theta$-stratifications, $\theta$-reductive stacks, and applications}, Algebraic Geometry: Salt Lake City \textbf{2018} (2015), 97--349.

\bibitem[KLMP24]{KLMP}
Yakov Kononov, Woonam Lim, Miguel Moreira, and Weite Pi, \emph{Cohomology rings of the moduli of one-dimensional sheaves on the projective plane}, arXiv preprint arXiv:2403.06277 (2024).

\bibitem[KV19]{kv19}
Mikhail Kapranov and Eric Vasserot, \emph{The cohomological {H}all algebra of a surface and factorization cohomology}, arXiv preprint arXiv:1901.07641 (2019).

\bibitem[Tub24]{Tub2}
Swann Tubach, \emph{Mixed {H}odge modules on stacks}, preprint, 2024.

\end{thebibliography}

\end{document}